\definecolor{DPurple}{rgb}{0.46,0.2,0.69}
\numberwithin{equation}{section}
\theoremstyle{definition}
\newtheorem{definition}{Definition}[section]
\theoremstyle{remark}
\newtheorem{remark}[definition]{Remark}
 \theoremstyle{plain}
\newtheorem{theorem}[definition]{Theorem}
\newtheorem{result}[definition]{Result}
\newtheorem{lemma}[definition]{Lemma}
\begin{document}

\title[Sharing of Moving Hyperplanes Wandering on $\mathbb{P}^n$]{Normal Families of Holomorphic Curves and Sharing of Moving 
Hyperplanes Wandering on $\mathbb{P}^n$}

\author{Gopal Datt}
\address{Department of Mathematics, Babasaheb Bhimrao Ambedkar Univeristy, Lucknow, India}
\email{ggopal.datt@gmail.com, gopal.du@bbau.ac.in}

\author{Naveen Gupta}
\address{Department of Mathematics, Indian Institute of Science, Bengaluru, India}
\email{naveengupta@iisc.ac.in}

\author{Nikhil Khanna}
\address{Department of Mathematics, Sultan Qaboos University, Muscat, Oman}
\email{n.khanna@squ.edu.om}

\author{Ritesh Pal}
\address{Department of Mathematics, Babasaheb Bhimrao Ambedkar Univeristy, Lucknow, India}
\email{rriteshpal@gmail.com}


\keywords{Complex projective space, holomorphic mapping, hyperplanes in general position, normal families }
\subjclass[2010]{Primary: 32A19, 30D45}

\begin{abstract}
In this paper, we extend a result of Schwick concerning normality and sharing values in one complex variable for families of holomorphic curves
taking values in $\mathbb{P}^n$.  We consider wandering moving hyperplanes (i.e., depending on the respective holomorphic curve in the family
under consideration), and establish a sufficient condition of normality concerning shared hyperplanes. 
\end{abstract}
\maketitle

\vspace{-0.6cm}
\section{Introduction and main results}\label{S:intro}

The main result of this paper is influenced by two results on normality in one complex variable 
 and their extensions in several complex variables. The first is the famous, Montel's theorem which is  also known as the fundamental normality test, and the
second is a result of Schwick, which is perhaps the first result that draws connection between normality and sharing values. Let us begin with the first 
influence: The classical Montel's theorem asserts that a family of  meromorphic mappings on a 
planar domain is normal if this family
omits three fixed, distinct points in the extended complex plane $\mathbb{C}\cup\{\infty\}$. A natural approach to 
generalizing the Montel's theorem is to take into account  exceptional (or omitted) values depending on the respective function in the family
under consideration, as opposed to keeping fixed exceptional values. This approach was studied by Carath\'eodory, and he established the following result
which is known as the Montel\,--\,Carath\'eodory theorem.

\begin{result}\label{R: MC Thm}\cite[Page no.\,202]{Caratheodory 60}
Let $\mathcal{F}$ be a family of functions meromorphic on a planar domain $D\subset\mathbb{C}$. Suppose that for  each 
$f\in\mathcal{F}$ there exists three distinct values $a_f, b_f, c_f$ in $\mathbb{C}\cup\{\infty\}$ such that $f$ omits values $a_f, b_f, c_f$.
Then the family $\mathcal{F}$ is normal provided the product of chordal distances of these points is greater than  a fixed positive 
number $\varepsilon>0$, for all $f\in \mathcal{F}$, i.e., 
$$\chi(a_f, b_f)\cdot \chi(b_f, c_f)\cdot \chi(a_f, c_f)> \varepsilon, \forall f\in\mathcal{F},$$ 
where $\chi$ denotes the chordal metric. 
\end{result}
 
Before stating the second result that influences this work, let us recall the meaning of sharing of values. Let $f$ and $g$ be two non-constant meromorphic 
functions and $\alpha\in\mathbb{C}\cup\{\infty\}$. We say $f$ and $g$ share the value $\alpha$ (ignoring multiplicities) if 
$ f^{-1}(\{\alpha\})=g^{-1}(\{\alpha\})$
as sets. (Note that $f^{-1}(\{\infty\})$ denotes the set of poles of $f$.) Schwick established the following result wherein he connects normality with
the notion of shared values. 

\begin{result}\label{R: Schwick}\cite{Schwick 92}
Let $\mathcal{ F}$ be a family of functions meromorphic on a planar domain $D\subset\mathbb{C}$,  and 
$a, b, c$ be three distinct fixed complex numbers. If for each $f\in\mathcal{F}$,  $f$ and 
its first derivative $f'$ share the values $a, b, c$, then $\mathcal{F}$ is normal on  $D$.
\end{result}

The Montel\,--\,Carath\'eodory theorem  and Result~\ref{R: Schwick} have been generalized for families of holomorphic
 mappings taking values in the 
$n$-dimensional complex projective 
space $\mathbb{P}^n$ (see \cite{Dufresnoy, YPY 15, Datt 24}). To understand these extensions and our main theorem, we need to recall some basic definitions and facts. 
We, first, recall a quantity\,---\,for a given  collection of $q\geq n+1$ hyperplanes in general position in $\mathbb{P}^n$\,---\,which 
quantifies in a natural way
to what extent this collection is in general position. 
To this end, we 
fix a system of homogeneous coordinates $w=[w_0\,:\,\dots\,:\,w_n]$ in $\mathbb{P}^n$, then 
any hyperplane $H$ in $\mathbb{P}^n$ is given by
\begin{equation}\label{eq: Hyperplane}
H:=\{[w_0\;:\,w_1\,:\;\dots\,:\,w_n]\in\mathbb{P}^n\,\big|\, a_0w_0+a_1w_1+\dots+a_nw_n=0\},
\end{equation}
where $(a_0,\dots,a_n)=\alpha\in\mathbb{C}^{n+1}$  is a non-zero vector. Let $H_1,\dots, H_{n+1}$ be $n+1$ hyperplanes in 
$\mathbb{P}^n$, and let $\alpha_j=(a_0^{(j)},\dots,a_n^{(j)})$ be non-zero vectors in $\mathbb{C}^{n+1}$ such that for each $j\in\{1, \dots, n+~1~\}$,
$H_j$ is given by the homogenous polynomial $a_0^{(j)}w_0+\dots+a_n^{(j)}w_n$ as in \eqref{eq: Hyperplane}. 
We define
\begin{equation*}
  {D}(H_1,\dots, H_{n+1}) :=
  \left|\det\big[a^{(j)}_l\big]_{1\leq j\leq n+1, \atop 0\leq l\leq n}\right|.
\end{equation*}
Note that the quantity $\mathcal{D}(H_1,\dots, H_{n+1})$ depends only on 
the hyperplanes $\{H_j\,:\,1\leq j\leq n+1\}$ and is independent of the choice of $\alpha_j$. Now, let $H_1,\dots, H_q$ be $q\geq n+1$ hyperplanes 
in $\mathbb{P}^n$. Set
\begin{equation*}
 \mathcal{D}(H_1,\dots, H_q) := 
  \prod_{1\leq j_1<\dots<j_{n+1}\leq q}{D}(H_{j_1},\dots, H_{j_{n+1}}).
\end{equation*}
We say that hyperplanes $H_1, \dots, H_q$ are in general position if $D(H_1,\dots, H_q)>0.$
\smallskip

We can now state the analogue of  Montel's Theorem in higher dimensional settings:
{\it A family of $\mathbb{P}^n$-valued  holomorphic  mappings on a domain $D\subseteq\mathbb{C}^m$
is normal if this family omits $2n+1$ hyperplanes in general position in $\mathbb{P}^n$}. Note that in this extension of the Montel's
 Theorem,
distinct values in $\mathbb{C}\cup\{\infty\}$ are replaced by the hyperplanes in general position in $\mathbb{P}^n$. To get 
an extension of Result~\ref{R: Schwick}, we need to extend the notion of derivative. This was done by Ye, Pang and Yang in \cite{YPY 15} for 
$\mathbb{P}^n$-valued holomorphic curves. Let us first see a few things about  a  
$\mathbb{P}^n$-valued holomorphic curves.
\smallskip

 In what follows in this section we fix $D\subset \mathbb{C}$ as a domain.
 Let $f:D\to \mathbb{P}^n$ be a holomorphic mapping. Fixing a system of homogeneous coordinates on
$\mathbb{P}^n$, for each $a\in D$, we have a holomorphic mapping $\widehat{f}(z):=(f_0(z), \dots, f_n(z))$ on some neighbourhood $U$ of $a$ such that
$\{z\in U\,\big|\, f_0(z)=f_1(z)=\dots=f_n(z)=0\}=\emptyset$ and $f(z)=[f_0(z)\,:\,f_1(z)\,:\,\dots\,:\,f_n(z)]$ for each $z\in U$. Any such holomorphic 
mapping $\widehat{f}:U\to \mathbb{C}^{n+1}$ is called a {\it reduced representation} of $f$ on $U$. A holomorphic mapping $f:D\to \mathbb{P}^n$ 
is called a {\bf holomorphic curve}. It is good to mention here that if for two open neighbourhoods, say $U_1$ and $U_2$, $\widehat{f}_1$ and $\widehat{f}_2$
are two reduced representations of $f$, and $U_1\cap U_2\neq \emptyset$ then there is a non-vanishing holomorphic function $h:U_1\cap U_2\to\mathbb{C}$
such that $\widehat{f}_2=h\widehat{f}_1$ on $U_1\cap U_2$.
\smallskip

Given a hyperplane $H$ as in \eqref{eq: Hyperplane}, and a reduced representation $\widehat f$ of a 
 holomorphic curve $f$, define a new holomorphic function 
\begin{equation}\label{eq: f h function}
\langle \widehat{f}, H\rangle:=\sum_{l=0}^na_i f_i(z),
\end{equation}
and put $$\|\widehat{f}(z)\|:=\max_{0\leq l\leq n}|f_i(z)|, \quad \text{ and } \quad  \|H\|:=\max_{0\leq l\leq n}|a_l|.$$
We shall use $f$ instead of $\widehat{f}$ in \eqref{eq: f h function}, when there is a possibility that
expressions are independent of the choice of reduced representations.
\smallskip

We  now prepare the recipe for extending the notion of derivative. To this end, take a holomorphic curve $f=[f_0\,:\,\dots\,:\,f_n]$ 
defined in 
$D\subset \mathbb{C}$ with $f_0\not\equiv 0$, and $d(z)$ be a function holomorphic in $D$ such that 
$$f_0^2/d \text{ and }W(f_0, f_l)/d \quad (l=1,\dots, n)$$
are holomorphic functions without common zeros. Here $W(f_0, f_l)$ denotes  the Wronskian of $f_0$ and $f_l$, i.e., $W(f_0, f_l)=f_0f_l'-f_0'f_l$. 
\begin{definition}\cite{YPY 15}
A holomorphic map induced by the map 
$$D\ni z\mapsto (f_0^2, W(f_0, f_1),\dots,W(f_0, f_n))\in \mathbb{C}^{n+1}$$
is called a {\bf derived holomorphic  map} of $f$ and is written as 
$$\nabla f=[f_0^2/d\,:\,W(f_0, f_1)/d\,:\,\dots\,:\,W(f_0, f_n)/d].$$
\end{definition}

We remark here that for $n=1$, a holomorphic curve $f=[f_0\,:\,f_1]$ is nothing but 
the meromorphic function $f_1/f_0,$ and the derived holomorphic map
 $\nabla f$ corresponds exactly to the derivative of the meromorphic function $f_1/f_0$.
\smallskip

Ye et.\,al., in \cite{YPY 15}, generalized Result~\ref{R: Schwick} for a family of holomorphic curves in the following manner. 
\begin{result}\cite{YPY 15}\label{R: YPY}
Let $\mathcal{F}$ be a family of holomorphic curves defined on a planar domain $D$ into $\mathbb{P}^n$, $H_1,\dots, H_{2n+1}$
 be hyperplanes in general 
position in $\mathbb{P}^n$, and $\delta$ be a real number in the open interval $(0,1)$. Suppose that for each $f\in\mathcal{F}$,
\begin{enumerate}
\item[{\em(1)}] the maps $f$ and $\nabla f$ share $H_j$ on $D$ for $j=1, \dots, 2n+1$.
\item[{\em(2)}] if $f(z)\in \displaystyle\cup_{j=1}^{2n+1} H_j,$ then $\displaystyle\frac{|\langle f(z), H_0\rangle|}{\|f(z)\|\cdot\|H_0\|}\geq \delta$, where
$H_0=\{x_0=0\}$.
\end{enumerate}  
Then the family $\mathcal{F}$ is normal on $D$.
\end{result} 

Now there arise two natural directions to generalize Result~\ref{R: YPY}:
\begin{enumerate}
\item[(a)]  Instead of fixed shared hyperplanes, one may consider shared hyperplanes depending on the respective holomorphic curve in the family
under consideration, kind of wandering hyperplanes. 
In this scenario, of course, one can hope for normality under stronger assumption on the {\bf general positions} of the hyperplanes. It was seen in the statement of the Montel\,--\,Carath\'eodory theorem, wherein the exceptional values $a_f, b_f, c_f$ are not merely distinct but are
 {\bf uniformly distinct} in some sense. 
\item[(b)] Instead of  taking merely hyperplanes one may take moving hyperplanes with some extra assumptions. 
\end{enumerate}

In this paper we unify these two directions by considering moving hyperplanes which depend on the respective holomorphic curve in the given family, that is,
kind of {\bf wandering moving hyperplanes}. We defer the definition of moving hyperplane and some other related notions to the Section~\ref{def}. 
We are now in a position to state our main theorem, which is as follows.
\begin{theorem}\label{T: MainTheorem}
Let $\mathcal{F}$ be a family of functions holomoprhic in a domain $D\subset \mathbb{C}$ into $\mathbb{P}^n$. 
Let $\varepsilon$ be a positive real number in the open interval $(0, 1)$. Assume that 
for each $f\in \mathcal{F}$ there exists $2n+1$ moving hyperplanes $H_{1,f}, \dots, H_{(2n+1),f}$ such that $\{\widetilde{H_{j,f}}\,:\, f\in \mathcal{F}\}$
$(j=1, 2,\dots 2n+1)$ are normal families and there exists a positive constant $\delta>0$ satisfying
\begin{equation*}
\mathcal{D}(H_{1,f}, \dots, H_{(2n+1),f})(z)>\delta, \qquad \text{ for all } \ z\in D, f\in \mathcal{F}.
\end{equation*}
Suppose that for each $f\in \mathcal{F}$ 
\begin{enumerate}
\item[\em(1)] $\bigtriangledown f(z)\in H_{j,f}$ if and only if $f(z)\in H_{j,f}$ for all $j=1, \dots, 2n+1$.
\item[\em(2)] If $f(z)\in \bigcup_{j=1}^{2n+1} H_{j,f}$, then $|f_0(z)|\geq \varepsilon \|f(z)\|$.
\end{enumerate}
Then $\mathcal{F}$ is normal in $D$.
\end{theorem}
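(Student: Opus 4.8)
I would argue by contradiction, using a Zalcman--Pang type renormalization for holomorphic curves together with the value distribution theory of holomorphic curves (Cartan's Second Main Theorem, Green's extension of Picard's theorem). Suppose $\mathcal{F}$ is not normal at some $z_0\in D$; after translating, $z_0=0$. Since each family $\{\widetilde{H_{j,f}}:f\in\mathcal F\}$ is normal, choose a sequence $(f_k)\subset\mathcal F$ witnessing non-normality and, passing to a subsequence, with $\widetilde{H_{j,f_k}}\to\widetilde{H_j}$ locally uniformly near $0$ for every $j$. The bound $\mathcal D(H_{1,f_k},\dots,H_{(2n+1),f_k})(z)>\delta$ passes to the limit, so the limiting moving hyperplanes $H_1,\dots,H_{2n+1}$ stay in general position at every point; in particular the fixed hyperplanes $\overline{H_j}:=\widetilde{H_j}(0)$ are in general position in $\mathbb{P}^n$. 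Now apply the renormalization lemma for $\mathbb{P}^n$-valued curves to $(f_k)$: there are $z_k\to 0$, $\rho_k\to 0^+$ (and a subsequence) such that, with $\sigma_k(\zeta)=z_k+\rho_k\zeta$, the rescaled curves $g_k:=f_k\circ\sigma_k$ converge locally uniformly on $\mathbb{C}$ to a non-constant holomorphic curve $g:\mathbb{C}\to\mathbb{P}^n$ of finite order, with reduced representations $\widehat{g_k}\to\widehat g$. Because $\widetilde{H_{j,f_k}}\circ\sigma_k\to\widetilde{H_j}(0)=\overline{H_j}$, the rescaling ``freezes'' the moving data at $0$, so in the limit we are left with the fixed hyperplanes $\overline{H_1},\dots,\overline{H_{2n+1}}$, still in general position.

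\textbf{Transfer to the limit.} This is the technical core. One checks that $\widehat{\nabla g_k}=T_k\cdot(\widehat{\nabla f_k}\circ\sigma_k)$, where $T_k\in\mathrm{GL}_{n+1}(\mathbb{C})$ is the \emph{degenerating} diagonal matrix $\mathrm{diag}(1,\rho_k,\dots,\rho_k)$ (each Wronskian component $W(f_{k,0},f_{k,l})$ picks up a factor $\rho_k$ under $\sigma_k$, while $f_{k,0}^2$ does not). Consequently the hyperplane ``shared by $\nabla f_k$'', after rescaling, is transported by $T_k$, and $T_k(\overline{H_j})\to\overline{H_j}^{\circ}$, the hyperplane obtained from $\overline{H_j}$ by setting its $w_0$-coefficient to $0$ (so $\overline{H_j}^{\circ}$ passes through $[1:0:\dots:0]$). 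Condition (2) is exactly what keeps this limit non-degenerate: whenever $f_k(\sigma_k(\zeta))\in H_{j,f_k}(\sigma_k(\zeta))$ one has $|f_{k,0}(\sigma_k(\zeta))|\ge\varepsilon\|f_k(\sigma_k(\zeta))\|$, so every intersection of $g$ with $\bigcup_j\overline{H_j}$ occurs off $H_0=\{w_0=0\}$, with the uniform bound $|g_0|\ge\varepsilon\|g\|$ there. Feeding the set-sharing (1) of the $f_k$ through Hurwitz's theorem applied to $\langle\widehat{g_k},\widetilde{H_{j,f_k}}\circ\sigma_k\rangle\to\langle\widehat g,\overline{H_j}\rangle$ and to $\langle\widehat{\nabla g_k},T_k\widetilde{H_{j,f_k}}\circ\sigma_k\rangle\to\langle\widehat{\nabla g},\overline{H_j}^{\circ}\rangle$, one obtains, for every $j$,
\[
\{\zeta:g(\zeta)\in\overline{H_j}\}=\{\zeta:\nabla g(\zeta)\in\overline{H_j}^{\circ}\}=\{\zeta:W(g_0,\langle\widehat g,\overline{H_j}\rangle)(\zeta)=0\}.
\]
Since these points avoid the zeros of $g_0$, comparing orders of vanishing in $W(g_0,\langle\widehat g,\overline{H_j}\rangle)=g_0\langle\widehat g,\overline{H_j}\rangle'-g_0'\langle\widehat g,\overline{H_j}\rangle$ shows that $g$ meets each $\overline{H_j}$ with multiplicity $\ge 2$ (and $g_0$ has only simple zeros). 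Exploiting that there are $2n+1$ hyperplanes in general position, and that the \emph{same} points are forced to appear for $\nabla g$ against the $\overline{H_j}^{\circ}$ with shifted multiplicities, one upgrades this to the statement that $g$ in fact \emph{omits} each of $\overline{H_1},\dots,\overline{H_{2n+1}}$.

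\textbf{Conclusion.} A non-constant holomorphic curve $g:\mathbb{C}\to\mathbb{P}^n$ cannot omit $2n+1$ hyperplanes in general position (Green's extension of the little Picard theorem, a special case of Cartan's Second Main Theorem with truncated counting functions, using $S(r)=o(T_g(r))$ since $g$ has finite order and is non-constant). This contradicts the non-constancy of $g$ furnished by the renormalization lemma, so $\mathcal{F}$ is normal on $D$.

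\textbf{Main obstacle.} The delicate step is the transfer: one is forced to use the ``natural'' rescaling $f_k\circ\sigma_k$ to get a non-constant limit $g$, yet the derived curves $\nabla f_k$ rescale through the degenerating matrices $T_k$, so care is needed to carry the sharing relation (1) faithfully to the limit and, crucially, to keep the intersection loci from being absorbed into $H_0$ — this is precisely the role of condition (2), and it is also what allows one to improve the immediate conclusion ``multiplicity $\ge 2$'' (which alone is insufficient for $n\ge 2$) to genuine omission of the $2n+1$ limiting hyperplanes. A secondary but essential point is the book-keeping showing that the uniform lower bound $\mathcal{D}(H_{1,f},\dots,H_{(2n+1),f})>\delta$ survives the passage to the limit, so that the frozen hyperplanes remain in general position; this is exactly what makes the \emph{wandering} hypothesis usable, in the spirit of the uniform separation of the exceptional values in the Montel--Carath\'eodory theorem.
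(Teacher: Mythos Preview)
Your setup is sound and matches the paper: Aladro--Krantz rescaling, freezing the moving hyperplanes at $z_0$ via normality of the $\widetilde{H_{j,f}}$, passing the lower bound on $\mathcal D$ to the limit, and the relation $\widehat{\nabla g_k}=T_k\,(\widehat{\nabla f_k}\circ\sigma_k)$ with $T_k=\mathrm{diag}(1,\rho_k,\dots,\rho_k)$ are all correct. The genuine gap is your ``upgrade'' from multiplicity $\ge 2$ to omission. You yourself note that multiplicity $\ge 2$ at $2n+1$ hyperplanes is insufficient when $n\ge 2$, but you provide no mechanism for the improvement. The set equality you obtain, $\{\langle g,\overline{H_j}\rangle=0\}=\{W(g_0,\langle g,\overline{H_j}\rangle)=0\}$, only says that $h_j:=\langle g,\overline{H_j}\rangle/g_0$ satisfies $\{h_j=0\}=\{h_j'=0\}$; this certainly does not force $h_j$ to be zero-free (already $h_j(\zeta)=\zeta^2$ has this property). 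Condition~(2) does not help here: it merely places the intersection points away from $H_0=\{w_0=0\}$, it does not eliminate them. So as written the contradiction is not reached, and neither Green nor Cartan's Second Main Theorem can be invoked.

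The paper's argument is different in kind and sidesteps the upgrade entirely. Starting from a single zero $\zeta_0$ of $\langle g,\overline{H_1}\rangle$ (furnished by Green), it does \emph{not} try to show that $g$ omits the $\overline{H_j}$. Instead it uses sharing in the strong form $\nabla f_\nu=f_\nu$ (as points of $\mathbb P^n$) on the intersection locus --- this is how ``share'' is defined in Section~2 and how condition~(1) is actually used in the proof --- to get $W(f_{\nu_0},f_{\nu_l})=f_{\nu_0}f_{\nu_l}$ there, hence $\langle\nabla f_\nu,H_{j,f_\nu}\rangle=f_{\nu_0}\,\langle f_\nu,H_{j,f_\nu}\rangle$ for \emph{every} $j$, not just $j=1$. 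Dividing by $f_{\nu_0}^2$, multiplying by $\rho_\nu$ and letting $\nu\to\infty$ then forces $\varphi_j(\zeta_0)=0$ for all $j\ge 2$, where $\varphi_j=\sum_{l\ge 1}a_{j_l}(z_0)\,W(g_0,g_l)/g_0^2$. A linear-algebra argument (general position) shows at most $n$ of the $\varphi_j$ vanish identically; for $n$ of the remaining indices one runs Hurwitz in the reverse direction to produce nearby points where $\nabla f_\nu\in H_{j,f_\nu}$, hence $f_\nu\in H_{j,f_\nu}$ by~(1), hence $\langle g(\zeta_0),\overline{H_j}\rangle=0$ in the limit. Together with $j=1$ this places the \emph{single} point $g(\zeta_0)$ in $n+1$ hyperplanes in general position, which is impossible. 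The idea you are missing is this cross-coupling across the index $j$, enabled by the equality $\nabla f=f$ at intersection points; your argument treats each $j$ in isolation and therefore cannot reproduce it.
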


The object $\widetilde{H_{j,f}}$ refers to certain holomorphic function related to the moving hyperplane 
$H_{j,f}$, whose precise definition is given in Section~\ref{def}.

\section{Basic notions}\label{def}

In this section, we shall explain the concepts and terms that made an appearance
in Section~\ref{S:intro}, and to introducing certain basic notions needed in our proofs. In this section,
$D\subset \mathbb{C}$ will always denote a planar domain.
\smallskip

Let $z\in D, \ j\in\{1, \dots, 2n+1\}$, define $$L_j(z)(w):=a_{0}^{(j)}(z)w_0+a_{1}^{(j)}(z)w_1+\dots+a_{n}^{(j)}(z)w_n,$$ 
where $w=(w_0,w_1,\dots, w_n)
\in\mathbb{C}^{n+1}$, and $a_{l}^{(j)}(z)$ $(0\leq l\leq n)$ are holomorphic functions on $D$ without common zeros. For any fixed 
$z\in D$, let $\ker\,(L_j(z))$ denote the kernel, in $\mathbb{C}^{n+1}$,
 of the surjective linear functional $L_j(z)\,:\,\mathbb{C}^{n+1}\to \mathbb{C}$. If $\rho:\mathbb{C}^{n+1}\setminus\{0\}\to \mathbb{P}^n$
is the standard projective mapping,  then 
$$H_j(z):=\rho(\ker\,(L_j(z))\setminus\{0\}) (\subset \mathbb{P}^n)$$
is the moving hyperplane corresponding to the linear form $L_j(z)$. If there is no confusion, we write $H_j$ instead of $H_j(z)$. Let $H_j\, 
(j=1\dots, {2n+1})$ be moving hyperplanes corresponding to the linear forms $L_j(z):=a_{0}^{(j)}(z)w_0+a_{1}^{(j)}(z)w_1+\dots+a_{n}^{(j)}(z)w_n$,
$j=1, \dots, 2n+1$. Define 
$$\mathcal{D}(H_1,\dots,H_{2n+1}):=\prod_{A\subset\{1,\dots, 2n+1\},|A|=n+1}\left(\sum_{\{j_0,\dots,j_n\}\subset A}\Big|
\det\big(a^{(j_l)}_{i}\big)_{0\leq l,i\leq n}\Big|\right).$$
The moving hyperplanes $H_1,\dots, H_{2n+1}$ are said to be in {\bf pointwise general position} if for every $z_0\in D$, the fixed 
hyperplanes $H_1(z_0),\dots, H_{2n+1}(z_0)$ are in general position. It is straightforward to see that 
the moving hyperplanes $H_1, \dots, H_{2n+1}$ are in pointwise general position in $\mathbb{P}^n$ if and only if 
$\mathcal{D}(H_1,\dots,H_{2n+1})(z)>0$ for all $z\in D$.
\smallskip

We now define the meaning of sharing hyperplane: Given two holomorphic curves  $f, g$ defined on $D$  into $\mathbb{P}^n$
and given a hyperplane $H$ in $\mathbb{P}^n$, we say the curves $f$ and $g$ share  the 
hyperplane $H$ if $f^{-1}(H)=g^{-1}(H)$ as sets and 
$f=g$ on $f^{-1}(H)$.

\smallskip
Let us now give the precise definition of the object $\widetilde{H_j}$ alluded to at the end of Section~\ref{S:intro}. Given a 
moving hyperplane $H$, in $\mathbb{P}^n$,  corresponding to the linear forms 
$L(z):=a_{0}(z)w_0+a_{1}(z)w_1+\dots+a_{n}(z)w_n$, we define a holomorphic mapping 
$\widetilde{H}$ on $D$ into $\mathbb{P}^n$ with the
reduced representation $(a_0(z),\dots, a_n(z))$, i.e., $\widetilde{H}(z)=[a_0(z)\,:\,\dots\,:\,a_n(z)], z\in D$.
\smallskip

Finally we end this section by giving the definition that is central to the discussion in Section~\ref{S:intro}. 
 The space $\operatorname{Hol}\,(D,\mathbb{P}^n)$ of holomorphic mappings
from $D$ into $\mathbb{P}^n$ is endowed with the compact-open topology.
\begin{definition}
A family $\mathcal{F}\subset \operatorname{Hol}\,(D,\mathbb{P}^n)$ is said to be normal if $\mathcal{F}$ is relatively compact in
$ \operatorname{Hol}\,(D,\mathbb{P}^n)$.
\end{definition}
It is worth noting that a sequence $\{f_\nu\}_{\nu=1}^\infty\subset \operatorname{Hol}\,(D,\mathbb{P}^n)$ converges uniformly to a holomorphic mapping 
$f$ on  compact subsets of $D$ if and only if  for any $a\in D$ , each $f_\nu$ has a reduced representation 
$\widehat{f}_\nu=(f_{\nu_0}, f_{\nu_1},\dots, f_{\nu_n})$ on some fixed open set $U\ni a$ in $D$ such that each sequence 
$\{f_{\nu_l}\}_{\nu=1}^\infty, (l=0, 1, \dots, n)$, converges to a holomorphic function $f_l$, $l=0,1,\dots, n$, on $U$ with the property that
$\widehat{f}=(f_0, f_1, \dots, f_n)$ is a reduced representation of $f$ on $U$.

\section{Essential Lemma}\label{S: EL}
In this section, we state two results that will be used to prove our main theorem. 
\smallskip

Zalcman's rescaling lemma is one of the well-known tools in the theory of normality. Loosely speaking, it asserts that in the absence of normality
a certain kind of infinitesimal convergence takes place. Zalcman's result has been studied widely and extended for various kind of families of functions 
(see \cite{AladroKrantz,Datt 19,Thai Trang Huong 03}). 
 While the maps of interest are univariate in this paper, they take values in the complex 
projective space $\mathbb{P}^n$ for $n\geq 2$, we need a higher-dimensional analogue of rescaling lemma of Zalcman, which is due to
Aladro and Krantz, and is stated as follows:

\begin{lemma}[{a special case of \cite[Theorem 3.1]{AladroKrantz}}]\label{L:Zalcman}
 Let $\mathcal{F}$ a family of holomorphic mappings 
 of a domain $D\subset \mathbb{C}^m$ into $\mathbb{P}^n$. The family $\mathcal{F}$ is not normal 
  if and only if there exist
  \begin{enumerate}
    \item[$(a)$] a point $z_0\in D$ and a sequence $\{z_\nu\}\subset D$ such that  $z_\nu\rightarrow z_0$;
    
    \item[$(b)$] a sequence $\{f_\nu\}\subset \mathcal{F}$;
    
    \item[$(c)$] a sequence $\{r_\nu\}\subset \mathbb{R}$ with $r_\nu>0$ and $r_\nu\rightarrow 0$; and
    
  \end{enumerate} 
  such that $ g_\nu(\zeta):= f_j(z_{\nu}+r_{\nu}\zeta)$, where $\zeta\in\mathbb{C}$ satisfies
  $z_{\nu}+r_{\nu}\zeta\in D$, converges uniformly on compact subsets of $\mathbb{C}$ to a 
  non-constant holomorphic mapping $h: \mathbb{C}\rightarrow \mathbb{P}^n.$
\end{lemma}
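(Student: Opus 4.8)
The plan is to prove both implications; the reverse one is immediate and all the substance is in the forward direction, which is the projective-space incarnation of Zalcman's renormalisation together with a Marty-type criterion (this is essentially \cite[Theorem 3.1]{AladroKrantz}). Throughout, for a holomorphic map $f$ into $\mathbb{P}^n$ I write $f^{\#}$ for its Fubini--Study spherical derivative: in a reduced representation $\widehat f=(f_0,\dots,f_n)$ one has $f^{\#}=\bigl(\sum_{i<j}|f_if_j'-f_jf_i'|^2\bigr)^{1/2}/\|\widehat f\|^2$, an intrinsically defined continuous function of the $1$-jet of $f$ which obeys the scaling law $g^{\#}(\zeta)=r\,f^{\#}(a+r\zeta)$ when $g(\zeta)=f(a+r\zeta)$, $r>0$. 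The black box I invoke is the \emph{Marty-type criterion}: $\mathcal{G}\subset\operatorname{Hol}(\Omega,\mathbb{P}^n)$ is normal iff $\{g^{\#}:g\in\mathcal{G}\}$ is locally uniformly bounded on $\Omega$. Granting this, the reverse implication is trivial: if such data and a non-constant limit $h$ existed while $\mathcal{F}$ were normal, then a subsequence of $\{f_\nu\}$ would converge locally uniformly to some $f\in\operatorname{Hol}(D,\mathbb{P}^n)$, whence $g_\nu(\zeta)=f_\nu(z_\nu+r_\nu\zeta)\to f(z_0)$, a constant, contradicting the non-constancy of $h$.

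For the forward implication, assume $\mathcal{F}$ is not normal. By the Marty criterion there is a point $z_0\in D$ near which the spherical derivatives of members of $\mathcal{F}$ are unbounded. Since the rescaling variable $\zeta$ in the conclusion is one-dimensional (for $m>1$ one reads $g_\nu(\zeta)=f_\nu(z_\nu+r_\nu\zeta\,u_\nu)$ for suitable unit vectors $u_\nu\to u_0$), after restricting all maps to an appropriate affine complex line through points near $z_0$ — non-normality persists on such a line, being witnessed by an unbounded directional spherical derivative — I may assume $D\subset\mathbb{C}$ and, shrinking $D$, that $\overline{B(z_0,r)}\subset D$ and $\max_{\overline{B(z_0,r)}}f_\nu^{\#}\to\infty$ for a subsequence $\{f_\nu\}\subset\mathcal{F}$. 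Now run Zalcman's maximisation: let $a_\nu\in B(z_0,r)$ maximise $z\mapsto(r^2-|z-z_0|^2)\,f_\nu^{\#}(z)$ over $\overline{B(z_0,r)}$, with maximum $M_\nu$; then $M_\nu\to\infty$, and since $M_\nu\le r^2f_\nu^{\#}(a_\nu)$ we get $f_\nu^{\#}(a_\nu)\to\infty$, so $r_\nu:=1/f_\nu^{\#}(a_\nu)\to0^{+}$; pass to a subsequence so that $a_\nu\to z_0$. Put $g_\nu(\zeta):=f_\nu(a_\nu+r_\nu\zeta)$, which is defined for $|\zeta|<(r-|a_\nu-z_0|)/r_\nu$, a radius tending to $\infty$ because $(r-|a_\nu-z_0|)f_\nu^{\#}(a_\nu)=M_\nu/(r+|a_\nu-z_0|)\ge M_\nu/(2r)\to\infty$.

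The payoff of the maximisation is a normalisation. On one hand $g_\nu^{\#}(0)=r_\nu f_\nu^{\#}(a_\nu)=1$; on the other, for $|\zeta|\le R$ the maximality of $M_\nu$ gives
\[
g_\nu^{\#}(\zeta)=\frac{f_\nu^{\#}(a_\nu+r_\nu\zeta)}{f_\nu^{\#}(a_\nu)}\le\frac{r^2-|a_\nu-z_0|^2}{\,r^2-|a_\nu+r_\nu\zeta-z_0|^2\,},
\]
and the right-hand side tends to $1$ uniformly on $|\zeta|\le R$: indeed $|a_\nu+r_\nu\zeta-z_0|\le|a_\nu-z_0|+r_\nu R$, and the error thereby subtracted from the denominator is $O(r_\nu R)$, which is $o(r^2-|a_\nu-z_0|^2)$ since $r^2-|a_\nu-z_0|^2=r_\nu M_\nu$ with $M_\nu\to\infty$. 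Hence $\{g_\nu^{\#}\}$ is locally uniformly bounded on $\mathbb{C}$, so by the Marty criterion $\{g_\nu\}$ is normal on $\mathbb{C}$, and a subsequence converges uniformly on compacta to a holomorphic $h:\mathbb{C}\to\mathbb{P}^n$ (compactness of $\mathbb{P}^n$ rules out any loss of the limit, and the limit of reduced representations is a reduced representation, so $h$ is genuinely holomorphic). Finally $g_\nu^{\#}\to h^{\#}$ locally uniformly, so $h^{\#}(0)=1$ and $h$ is non-constant; renaming $z_\nu:=a_\nu$ furnishes the required data.

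The genuine content, and the only spot demanding care, is the uniform estimate above: the maximiser $a_\nu$ may drift to the boundary circle $|z-z_0|=r$, so $r^2-|a_\nu+r_\nu\zeta-z_0|^2$ is \emph{not} bounded below by a fixed positive constant; the resolution is that the perturbation subtracted from it is of order $r_\nu R$, negligible against $r^2-|a_\nu-z_0|^2=r_\nu M_\nu$ precisely because $M_\nu\to\infty$. The second ingredient, used rather than proved, is the Fubini--Study Marty criterion in both directions (and in the directional several-variable form needed for the reduction to a complex line); this is classical and is exactly what makes \cite[Theorem 3.1]{AladroKrantz} run.
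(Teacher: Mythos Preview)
The paper does not prove this lemma: it is quoted as a special case of \cite[Theorem~3.1]{AladroKrantz} (with a remark pointing to \cite{Thai Trang Huong 03} for a correction in the non-compact target case) and used as a black box, so there is no in-house argument to compare against. What you have written is precisely the standard Zalcman--Marty renormalisation that underlies the Aladro--Krantz theorem, and for the case $m=1$\,---\,the only one the paper actually uses\,---\,it is essentially correct.

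Two small points of presentation. First, the clause ``pass to a subsequence so that $a_\nu\to z_0$'' is not justified as stated: the maximisers $a_\nu$ lie in $\overline{B(z_0,r)}$ and a subsequence converges to \emph{some} point of that closed ball, which need not be the original $z_0$. One simply takes that subsequential limit (which is in $D$ since $\overline{B(z_0,r)}\subset D$) as the new $z_0$; your final ``renaming $z_\nu:=a_\nu$'' should be accompanied by this relabelling. Second, for $m>1$ the phrase ``restricting all maps to an appropriate affine complex line'' is a sketch rather than a proof: the clean route is not to pick a line in advance but to maximise $(r-|z-z_0|)\,f_\nu^{\#}(z)$ over the ball in $\mathbb{C}^m$, obtain $a_\nu$, then choose unit vectors $u_\nu$ realising the operator norm of $df_\nu(a_\nu)$ and set $g_\nu(\zeta)=f_\nu(a_\nu+r_\nu\zeta\,u_\nu)$, exactly as your parenthetical indicates. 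Since the paper only needs $m=1$, this is immaterial here.
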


\begin{remark}
  We should mention here that the result of Aladro\,--\,Krantz in \cite{AladroKrantz} is more general 
  than Lemma~\ref{L:Zalcman}. There is a case missing,  wherein $M$ is {\it non-compact}, from
  their analysis. The correct  arguments were provided by
  \cite[Theorem 2.5]{Thai Trang Huong 03}. At any rate, Lemma~\ref{L:Zalcman} is the version of the above-mentioned result of 
Aladro and Krnatz that we need.
\end{remark} 

The following result, due to Green, is about the generalization of Picard's little theorem to several variables. The classical  Picard's theorem
asserts that a meromorphic function in $\mathbb{C}$ is constant if it omits three distinct values in $\mathbb{C}\cup\{\infty\}$. 

\begin{lemma}[{\cite[Corollary 2]{Green}}]\label{L:Green}
A holomorphic map $f:\mathbb{C}^m\to \mathbb{P}^n$ which omits $2n+1$ hyperplanes in general position in $\mathbb{P}^n$ is constant. 
\end{lemma}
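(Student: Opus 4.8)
The plan is to reduce the assertion to the one-variable, linearly non-degenerate case and then to invoke Cartan's Second Main Theorem for holomorphic curves. First I would cut the number of variables down to one. Since a holomorphic map is constant exactly when its differential vanishes identically, and the differential may be tested along complex lines, $f$ is constant as soon as its restriction $f\circ\ell$ to every affine complex line $\ell:\mathbb{C}\to\mathbb{C}^m$ is constant. Each such restriction $f\circ\ell:\mathbb{C}\to\mathbb{P}^n$ is holomorphic and, having image contained in that of $f$, still omits the same $2n+1$ hyperplanes $H_1,\dots,H_{2n+1}$, which remain in general position in $\mathbb{P}^n$. Thus it suffices to treat $m=1$.

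Next I would remove linear degeneracy. Let $V=\mathbb{P}(\widetilde V)$ be the smallest projective linear subspace containing $f(\mathbb{C})$, say $\dim V=k$, so that $f$ is linearly non-degenerate as a curve into $V\cong\mathbb{P}^k$. If $k=0$ then $f$ is constant and we are done, so assume $k\geq1$. Because $f$ omits each $H_j$, its image, hence $V$, is not contained in any $H_j$; consequently each intersection $H_j\cap V$ is a genuine hyperplane of $V$ that $f$ still omits. Everything now turns on the following combinatorial lemma on general position: among the hyperplanes $H_1\cap V,\dots,H_{2n+1}\cap V$ of $V$ one can select at least $2k+1$ that are in general position in $V$. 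In coordinates this says that, writing $\beta_j$ for the restriction to $\widetilde V$ of a linear form defining $H_j$, one can find $2k+1$ of the nonzero functionals $\beta_j\in\widetilde V^{*}$ (where $\dim\widetilde V^{*}=k+1$) every $k+1$ of which are independent; the feature of the hypothesis that makes this work is that any $\leq n+1$ of the upstairs forms are independent, which bounds how often a degeneracy among the restricted forms can occur (for instance, when $k=1<n$: general position upstairs forbids three of the $H_j$ from being concurrent, so each point of $V$ is met by at most two of them, and $2n+1\geq5$ points of multiplicity at most two leave at least $3=2k+1$ distinct points).

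With $2k+1$ omitted hyperplanes in general position in $V$ in hand, I would apply Cartan's Second Main Theorem to the linearly non-degenerate curve $f:\mathbb{C}\to V\cong\mathbb{P}^k$. Writing $T_f(r)$ for its Nevanlinna--Cartan characteristic and $N_f^{[k]}(r,\cdot)$ for the counting functions truncated at level $k$, the theorem gives, for these $q=2k+1$ hyperplanes,
\[
\bigl((2k+1)-(k+1)\bigr)\,T_f(r)\;\leq\;\sum_{j}N_f^{[k]}(r,H_j\cap V)+S_f(r),
\]
valid as $r\to\infty$ outside a set of finite measure, with $S_f(r)=o\bigl(T_f(r)\bigr)$. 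Since $f$ omits every $H_j\cap V$, all the counting functions on the right vanish, leaving $k\,T_f(r)\leq S_f(r)=o\bigl(T_f(r)\bigr)$. If $T_f(r)\to\infty$ this is impossible for $k\geq1$ (divide by $T_f(r)$ and let $r\to\infty$), so $T_f$ is bounded; and a holomorphic curve $\mathbb{C}\to\mathbb{P}^k$ of bounded characteristic is constant. Hence every restriction $f\circ\ell$ is constant, and therefore $f$ is constant.

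I expect the main obstacle to be the combinatorial lemma asserting the inheritance of general position by the restricted hyperplanes with the sharp count $2k+1$: the bound is tight, so a careless estimate will not suffice, and this is precisely where the numerical hypothesis ``$2n+1$'' is consumed. The remaining analytic input, Cartan's Second Main Theorem, is a standard black box; as an alternative to it one could instead run the classical argument of Green via Borel's theorem on exponential sums, writing each omitted form as $\langle\widehat f,\alpha_j\rangle=e^{h_j}$ with $h_j$ entire, and extracting from general position enough linear relations among the functions $e^{h_j}$ to force all differences $h_i-h_j$ to be constant, whence $[\widehat f]$, and so $f$, is constant.
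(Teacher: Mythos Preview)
The paper does not prove this lemma at all: it is stated in Section~\ref{S: EL} as a citation of \cite[Corollary~2]{Green} and used as a black box in the proof of Theorem~\ref{T: MainTheorem}. There is therefore no ``paper's own proof'' to compare against.

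On its own merits your outline is sound and follows the standard route to Green's theorem. The reduction to $m=1$ via restriction to complex lines is correct, as is the reduction to the linearly non-degenerate case by passing to the linear span $V\cong\mathbb{P}^k$ of the image. You have correctly isolated the only nontrivial step: the combinatorial lemma that from $2n+1$ hyperplanes in general position in $\mathbb{P}^n$, their traces on a $k$-plane $V$ (not contained in any of them) contain at least $2k+1$ in general position in $V$. This is precisely the lemma Green proves; your $k=1$ illustration is right, but for general $k$ the naive greedy-plus-pigeonhole count is not sharp and one needs a slightly more careful argument. Once that is in hand, either Cartan's Second Main Theorem (as you wrote) or, alternatively, the Cartan--Nochka inequality applied directly to the traces viewed as hyperplanes in $n$-subgeneral position in $\mathbb{P}^k$, finishes the proof. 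Your alternative via Borel's lemma on units is exactly Green's original argument. In short: the proposal is correct as a sketch, with the combinatorial lemma the one place where real work remains, exactly as you flagged.
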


\section{Proof of Main Theorem}
\begin{proof}[Proof of Theorem~\ref{T: MainTheorem}]
Let $\{f_\nu\}_{\nu=1}^\infty \subset \mathcal{F}$ be an arbitrarily chosen sequence. We aim to show that $\{f_\nu\}$ has a subsequence that
converges uniformly on compact subsets of $D$.  By the hypothesis of theorem, the families $\{\widetilde{H_{j,f}}\,:\, f\in \mathcal{F}\}$,
$j=1, 2,\dots, 2n+1$, that are induced from the collection of hyperplanes are normal. We assume, without loss of generality, that for each 
$j\in\{1,\dots, 2n+1\}$, the sequence $\{\widetilde{H_{j,f_\nu}}\}$ converges uniformly on compact subsets of $D$ to a holomorphic mapping, say $h_j$.
Let $\widehat{h_j}(z)=(a_{j_0}(z),\dots,a_{j_n}(z))$ be a reduced representation of  $h_j$, $j=1, \dots, 2n+1$. 
Let $H_j$, $j=1, \dots, 2n+1,$ be moving hyperplanes given by the 
homogeneous polynomials $a_{j_0}(z)w_0+\dots+a_{j_n}(z)w_n$, $j=1,\dots, 2n+1$. By the hypothesis we have
\begin{equation*}
\mathcal{D}(H_{1,f_v}, \dots, H_{(2n+1),f_\nu})(z)>\delta, \qquad \text{ for all } \ z\in D, \nu\geq 1,
\end{equation*}
therefore, we can claim that the limiting hyperplanes $H_1,\dots, H_{2n+1}$ are in general position in $\mathbb{P}^n$.
\smallskip

We now assume on the contrary that the sequence $\{f_\nu\}$, chosen in the previous paragraph, is not normal in $D$. Therefore, by 
Lemma~\ref{L:Zalcman} there exist:
\begin{itemize}
\item a subsequence of $\{f_\nu\}$, which we again denote by $\{f_\nu\}$ after renumbering,
\item a sequence of points $\{z_\nu\}\subset D$ such that $z_\nu\to z_0$, 
\item a sequence of positive real numbers $\{\rho_\nu\}\subset (0,+\infty)$ with
$\rho_\nu\to 0$,
\end{itemize}
 such that\,---\,defining the maps $g_\nu\,:\, \zeta\mapsto f_\nu(z_\nu+\rho_\nu\zeta)$ on suitable neighbourhoods of 
$0\in\mathbb{C}$\,---\,$\{g_\nu\}$  converges uniformly on compact subsets of $\mathbb{C}$ to a {\it non-constant} holomorphic map
$g\,:\,\mathbb{C}\to \mathbb{P}^n$. Then there exist reduced representations $\widehat{g}=(g_0,\dots, g_n)$ of $g$, and
 $\widehat{g_\nu}=(g_{\nu_0},\dots, g_{\nu_n})$, $\nu\geq 1$ of $g_\nu$, $\nu\geq 1$ such that $\{g_{\nu_l}\}$ converges uniformly
on compact subsets of $\mathbb{C}$ to $g_l$, for $l=0, \dots, n$. Which further implies\,---\, in view of 
the normality of $\{\widetilde{H_{j,f}}\,:\, f\in \mathcal{F}\}$\,---\,that 
$\langle \widehat{g_\nu}(\zeta), {H_{j,f\nu}}(z_\nu+\rho_\nu\zeta) \rangle$ converges uniformly on compact subsets of $\mathbb{C}$ to
$\langle \widehat{g}(\zeta), {H_{j}}(z_0) \rangle$. Since $H_j$, $j=1, 2,\dots, 2n+1$, are moving hyperplanes in general position in $\mathbb{P}^n$,
we infer that $H_j(z_0)$ $j=1, 2,\dots, 2n+1,$ are fixed hyperplanes in general position in $\mathbb{P}^n$.
\smallskip

Since $g$ is non-constant and $H_1(z_0), \dots, H_{2n+1}(z_0)$ are in general position,
 by Lemma~\ref{L:Green}, there exist at least one hyperplane $H_j(z_0)$, say $H_1(z_0)$, such that 
$\langle g(\zeta), H_1(z_0)\rangle\not \equiv 0$, and has some zeros in $\mathbb{C}$. Recall that 
$$\langle g(\zeta), H_1(z_0)\rangle=a_{j_0}(z_0)g_0(\zeta)+\dots+a_{j_n}(z_0)g_n(\zeta), \zeta \in\mathbb{C}.$$
Let $\zeta_0$ be a zero of $\langle g(\zeta), H_1(z_0)\rangle$, that is,
\begin{equation}\label{eq: g zero}
\langle g(\zeta_0), H_1(z_0)\rangle=0.
\end{equation}
 Since zeros of holomorphic 
functions are isolated, there exists a neighbourhood $N$ of $\zeta_0$ such that $\langle g(\zeta), H_1(z_0)\rangle\neq 0$ for all
$\zeta\in N\setminus\{\zeta_0\}$. We may, if necessary, 
shrink the neighbourhood $N$ so that  the reduced representation $\widehat g_\nu$ of $g_\nu$ is given by
$$\widehat{g_\nu}(\zeta)=(g_{\nu_0}(\zeta),\dots, g_{\nu_n}(\zeta))=(f_{\nu_0}(z_\nu+\rho_nu\zeta),\dots,f_{\nu_n}(z_\nu+\rho_nu\zeta))$$
on $N$. By using the argument principle, we get a sequence $\zeta_\nu\to \zeta_0$ such that for large enough $\nu$, we have 
$$\langle \widehat{g_\nu}(\zeta_\nu), H_{1, f_\nu}(z_\nu+\rho_\nu\zeta_\nu)\rangle=
\langle \widehat{f_\nu}(z_\nu+\rho_\nu\zeta_\nu), H_{1, f_\nu}(z_\nu+\rho_\nu\zeta_\nu)\rangle=0.$$
Let $(a^{(1)}_{\nu_0},\dots, a^{(1)}_{\nu_n})$ be a reduced representation of $\widetilde{H_{1_{f_\nu}}}$, then we obtain
 that 
\begin{equation}\label{eq: H_1f}
 \sum_{l=0}^n a^{(1)}_{\nu_l}(z_\nu+\rho_\nu\zeta_\nu)f_{\nu_l}=0.
\end{equation}
This implies that $f_\nu(z_\nu+\rho_\nu\zeta_\nu)\in H_{1, f_\nu}$, therefore, by using Condition (2), we have 
\begin{equation}\label{eq: cond 2}
|f_{\nu_0}(z_\nu+\rho_\nu\zeta_\nu)|\geq \varepsilon \|f_\nu(z_\nu+\rho_\nu\zeta_\nu)\|.
\end{equation}
Letting $\nu\to \infty$ in \eqref{eq: cond 2}, we get $|g_0(\zeta_0)|\geq \varepsilon\|g(\zeta_0)\|>0$. Which means $g_0(\zeta_0)\neq 0$. 
By the continuity of the function $g_0$ there exists  a neighbourhood $U$ such that $g_0(\zeta)\neq 0$ for all $\zeta \in U$. Thus, 
$g_{\nu_0}(\zeta)\neq 0$ on $U$ for large enough $\nu$. After renumbering again, we may assume that for each $\nu$,
\begin{equation*}
(g^2_{\nu_0}(\zeta), W(g_{\nu_0}, g_{\nu_1})(\zeta),\dots, W(g_{\nu_0}, g_{\nu_n})(\zeta))
\end{equation*} 
is a reduced representation of $\nabla g_\nu(\zeta)$ on $U$. Note that $f_{\nu_0}(z_\nu+\rho_\nu\zeta_\nu)\neq 0$, therefore, we have 
\begin{equation*}
\nabla f_\nu(z_\nu+\rho_\nu\zeta_\nu)=[f^2_{\nu_0}(z_\nu+\rho_\nu\zeta_\nu)\,:\,W(f_{\nu_0}, f_{\nu_1})(z_n+\rho_\nu\zeta_\nu)\,:\,\dots\,:\,
W(f_{\nu_0}, f_{\nu_n})(z_\nu+\rho_\nu\zeta_\nu)].
\end{equation*}
By Condition (1), $f_\nu(z_\nu+\rho_\nu\zeta_\nu)\in H_{1,f_\nu}$ if and only if 
$\nabla f_\nu(z_\nu+\rho_\nu\zeta_\nu)\in H_{1, f_\nu}$. Therefore, we obtain that
\begin{equation*}
W(f_{\nu_0},f_{\nu_l})(z_\nu+\rho_\nu\zeta_\nu)=f_{\nu_0}(z_\nu+\rho_\nu\zeta_\nu)\cdot f_{\nu_l}(z_\nu+\rho_\nu\zeta_\nu),
\end{equation*}
for $l=1, \dots, n$. Hence,
\begin{align}
|\langle\nabla f_\nu(z_\nu+\rho_\nu\zeta_\nu), H_{1, f_\nu}(z_\nu+\rho_\nu\zeta_\nu)\rangle|&=
|f_{\nu_0}(z_\nu+\rho_\nu\zeta_\nu)||\langle f_\nu(z_\nu+\rho_\nu\zeta_\nu), H_{1, f_\nu}(z_\nu+\rho_\nu\zeta_\nu) \rangle|\notag\\
&\leq |f_{\nu_0}(z_\nu+\rho_\nu\zeta_\nu)|\|H_{1,f_\nu}\|\|f_\nu(z_\nu+\rho_\nu\zeta_\nu)\|\notag\\
&\leq \frac{|f_{\nu_0}(z_\nu+\rho_\nu\zeta_\nu)|^2\|H_{1,f_\nu}\|}{\varepsilon}.\label{eq: cond 3}
\end{align}
Similarly, if $f_\nu(w_0)\in \displaystyle\cup_{j=1}^{2n+1}H_{j,f_\nu}$ then using
 Condition (2) of Theorem we have $f_{\nu_0}(w_0)\neq 0$. As above, we can prove that 
\begin{equation}\label{eq: cond 3-1}
|\langle\nabla f_\nu(w_0), H_{1, f_\nu}(w_0)\rangle|\leq  \frac{|f_{\nu_0}(w_0)|^2\|H_{1,f_\nu}\|}{\varepsilon}.
\end{equation}
Note that, by using \eqref{eq: H_1f}, we have $f_\nu(z_\nu+\rho_\nu\zeta_\nu)\in H_{1, f_\nu}$, therefore by \eqref{eq: cond 3-1}, we get
\begin{align*}
|\langle\nabla f_\nu(z_\nu+\rho_\nu\zeta_\nu), H_{j, f_\nu}(z_\nu+\rho_\nu\zeta_\nu)\rangle|&= 
|a^{(j)}_{\nu_0}(z_\nu+\rho_\nu\zeta_\nu)f_{\nu_0}^2(z_\nu+\rho_\nu\zeta_\nu)+ \\
&\qquad
a^{(j)}_{\nu_1}(z_\nu+\rho_\nu\zeta_\nu)W(f_{\nu_0}, f_{\nu_1})(z_\nu+\rho_\nu\zeta_\nu)+\\
& \qquad \dots+ a^{(j)}_{\nu_n}(z_\nu+\rho_\nu\zeta_\nu)W(f_{\nu_0}, f_{\nu_n})(z_\nu+\rho_\nu\zeta_\nu)|\\
&\leq \frac{\|f_{\nu_0}^2(z_\nu+\rho_\nu\zeta_\nu)\|\|H_{j,f_\nu}(z_\nu+\rho_\nu\zeta_\nu)\|}{\varepsilon}.
\end{align*}
This implies, for all $j=2, \dots, 2n+1$, that 
\begin{equation*}
\Big| a^{(j)}_{\nu_0}(z_\nu+\rho_\nu\zeta_\nu)+
\sum_{l=1}^n \frac{ a^{(j)}_{\nu_l}(z_\nu+\rho_\nu\zeta_\nu)W(g_{\nu_0}, g_{\nu_l})(\zeta_\nu)}{\rho_\nu g_{\nu_0}^2(\zeta_\nu)}\Big|\leq 
\frac{\|H_{1,f_\nu}(z_\nu+\rho_\nu\zeta_\nu)\|}{\varepsilon}.
\end{equation*}
Which gives , for all $j=2, \dots, 2n+1$, that 
\begin{equation*}
\Big|\rho_\nu a^{(j)}_{\nu_0}(z_\nu+\rho_\nu\zeta_\nu)+
\sum_{l=1}^n \frac{ a^{(j)}_{\nu_l}(z_\nu+\rho_\nu\zeta_\nu)W(g_{\nu_0}, g_{\nu_l})(\zeta_\nu)}{ g_{\nu_0}^2(\zeta_\nu)}\Big|\leq 
\frac{\rho_\nu\|H_{1,f_\nu}(z_\nu+\rho_\nu\zeta_\nu)\|}{\varepsilon}.
\end{equation*}
Letting $\nu\to \infty$, we obtain 
$\displaystyle\sum_{l=1}^n \frac{ a^{(j)}_{\nu_l}(z_0)W(g_{0}, g_{l})(\zeta_0)}{ g_{0}^2(\zeta_0)}=0$. Set
\begin{equation*}
\varphi_j(\zeta):= \sum_{l=1}^n \frac{ a^{(j)}_{\nu_l}(z_0)W(g_{0}, g_{l})(\zeta)}{ g_{0}^2(\zeta)},  \quad \zeta \in U.
\end{equation*}
\smallskip

We now proceed as in  \cite[ Proof of Theorem~2.1]{YPY 15} to observe that there are at most $n$ hyperplanes in 
$\{H_2, \dots, H_{2n+1}\}$ such that
$\varphi_j(\zeta)\equiv0$.  Suppose it is not true, and assume on the contrary that $\varphi_2(\zeta)\equiv\dots\equiv\varphi_{n+2}(\zeta)\equiv0$.
Since 
 \begin{equation*}
\varphi_j(\zeta)=\Big(\sum_{l=1}^{n} a^{(j)}_{\nu_l}(z_0)\frac{g_l}{g_0}\Big)'\equiv 0,
\end{equation*}
there exist $n+1$ complex numbers $c_2, \dots, c_{n+2}$ such that $\displaystyle\sum_{l=1}^n a^{(j)}_{\nu_l}(z_0)\frac{g_l}{g_0}\equiv c_j$ for 
all $j=2, \dots, n+2$. Since $H_1(z_0), \dots, H_{2n+1}(z_0)$ are in general position in $\mathbb{P}^n$, we infer that the system of equations
\begin{equation*}
\sum_{l=1}^n a^{(j)}_{\nu_l}(z_0) x_l \equiv c_j, \qquad \text{ for } j=2, \dots, n+2,
\end{equation*}
has no solutions or the solution is unique. This means that $g$ is  constant, which is a contradiction. Thus, we may assume that 
$\varphi_j(\zeta)\not\equiv 0$ for $j=2, \dots, n+1$. Then  for each $j\in \{2, \dots, n+1\}$, 
\begin{equation*}
\rho_\nu a_{\nu_0}^{(j)}(z_\nu+\rho_\nu\zeta)+\rho_\nu \sum_{l=1}^n a^{(j)}_{\nu_l}(z_\nu+\rho_\nu\zeta)\frac{W(f_{\nu_0}, f_{\nu_l})}{f_{\nu_0}^2}
(z_\nu+\rho_\nu\zeta)
\end{equation*}
converges uniformly to $\varphi_j(\zeta)$ on $U$. By the argument principle, again, we get a sequence $\zeta_\nu^*\to \zeta_0$ such that 
$$\langle \nabla f_\nu(z_\nu+\rho_\nu\zeta_\nu^*), H_{j,f_\nu}(z_\nu+\rho_\nu\zeta_\nu^*)\rangle =0.$$
Which further implies, by Condition (1) of Theorem, that 
$$\langle  f_\nu(z_\nu+\rho_\nu\zeta_\nu^*), H_{j,f_\nu}(z_\nu+\rho_\nu\zeta_\nu^*)\rangle =0.$$
Letting $\nu \to \infty$, we get $\langle  g(\zeta_0), H_{j}(z_0)\rangle =0$, for $j= 2,\dots, n+1$. Also from \eqref{eq: g zero}, we have 
$$\langle  g(\zeta_0), H_{j}(z_0)\rangle =0, \text{ for } j=1 ,2, \dots, n+1.$$
Which is a contradiction to the fact that $H_1(z_0), \dots, H_{2n+1}(z_0)$ are in general position in $\mathbb{P}^n$. Hence, $\mathcal{F}$ is normal. \end{proof}

\section*{Acknowledgements}\vspace{-1mm}
The second author is supported by the National Board for Higher Mathematics (NBHM) Post-doctoral fellowship (Ref. No. 0204/27/(20)/2023/R \& D-II/11900) and by a DST-FIST grant (grant no. DST FIST-2021 [TPN-700661]) at the Indian Institute of Science, Bangalore. The fourth author is supported by UGC Non-Net Fellowship.

\end{document}